\newtheorem{prelem}{{\bf Theorem}}
 \newtheorem{theorem}{Theorem}
\newtheorem{corollary}[theorem]{Corollary}
\newtheorem{lemma}[theorem]{Lemma}
\newtheorem{observation}[theorem]{Observation}
\theoremstyle{definition}
\theoremstyle{remark}
\title{Lower Bounds on  Nonnegative Signed  Domination Parameters in Graphs}
\date{}
\author
{Arezoo. N. Ghameshlou\thanks{Research supported by the research council of
Faculty of Agriculture Engineering and Technology, University of
Tehran, through grant No. 322870/1/03.}\\
Department of Irrigation and Reclamation Engineering\\
University of Tehran, I.R. Iran\\
{\tt a.ghameshlou@ut.ac.ir}\vspace{3mm}\\
}
\begin{document}
\maketitle
% ----------------------------------------------------------------
\begin{abstract}
 \noindent Let $1 \leq k \leq n$ be a positive integer. A {\em nonnegative signed $k$-subdominating
function} is a function $f:V(G) \rightarrow \{-1,1\}$ satisfying
$\sum_{u\in N_G[v]}f(u) \geq 0$ for at least $k$ vertices $v$ of $G$.
The value $\min\sum_{v\in V(G)} f(v)$, taking over all
nonnegative signed $k$-subdominating functions $f$ of $G$, is called the {\em  nonnegative signed $k$-subdomination
number} of $G$ and denoted by $\gamma^{NN}_{ks}(G)$. When $k=|V(G)|$,
$\gamma^{NN}_{ks}(G)=\gamma^{NN}_s(G)$ is the  {\em  nonnegative signed domination
number}, introduced
in \cite{HLFZ}.  In this paper, we investigate several sharp lower bounds of $\gamma^{NN}_s(G)$, which extend some presented  lower bounds on $\gamma^{NN}_s(G)$. We also initiate the study of the nonnegative signed $k$-subdomination number in graphs and establish some sharp lower bounds for $\gamma^{NN}_{ks}(G)$ in terms of
order and the degree sequence of a graph $G$.
\vspace{3mm}

\noindent {\bf Keywords:} nonnegative signed domination number; nonnegative signed $k$-subdomination number\\
{\bf  MSC 2000}: 05C69
\end{abstract}

\section{Introduction}
Let $G$ be a simple graph of {\em order} $n$ with the vertex set $V(G)$ and {\em size} $m$ with the
edge set $E(G)$. We use \cite{W} for terminology and notation,
which are not defined here. The {\em minimum} and {\em maximum  degrees}
in graph $G$ are denoted by $\delta(G)$ and $\Delta(G)$,  respectively. A vertex $v \in V(G)$ is called an \textit{odd} (\textit{even}) vertex if  $\deg_G(v)$ is odd (even). For a graph $G=(V,E)$, let $V_{o}$ ( $V_e$) be the set of odd (even) vertices with $n_o=\mid V_o \mid$ and $n_e=\mid V_e \mid$. If $X \subseteq V(G)$, then $G[X]$ is the {\em subgraph} of $G$ {\em induced} by $X$. For disjoint subsets $X$ and $Y$ of vertices we let
$E(X,Y)$ denote the set of edges between $X$ and $Y$. The {\em
open neighborhood} $N_G(v)$ of a vertex $v\in V(G)$ is the set of
all vertices adjacent to $v$. Its {\em closed neighborhood} is
$N_G[v] = N_G(v) \cup \{v\}$.  In addition, the \textit{open} and \textit{closed neighborhoods} of a subset $S \subseteq V(G)$ are $N_G(S)=\cup_{v\in S}N_G(v)$ and
$N_G[S]=N_G(S)\cup S$, respectively. The \textit{degree} of a vertex $v \in V(G)$ is $\deg_G(v)= \mid N_G(v) \mid$.
For a real-valued function $f: V(G)
\longrightarrow {\mathbb R}$ the weight of $f$ is $\omega(f)=\sum
_{v \in V(G)}f(v)$ and for a subset $S$ of $V(G)$ we define $f(S)
= \sum_{v\in S} f(v)$, so $\omega(f)=f(V(G))$.
For a positive integer $1 \leq k \leq n$, a {\em signed $k$-subdominating function} (SkSDF) of G is a function $f: V(G)\longrightarrow \{-1, 1\}$ such that $f (N_G[v]) \geq1$ for
at least $k$ vertices $v$ of $G$. The {\em signed $k$-subdomination number} for a graph $G$ is defined as $\gamma_{ks}(G) = \min\{w(f )|f \mbox{ is a SkSDF of } G\}$. The concept of the signed $k$-subdomination number was introduced and studied by Cockayne and Mynhardt \cite{CM}.
A {\em nonnegative signed dominating function} (NNSDF) of $G$  defined in
\cite{HLFZ} as a function  $f : V(G) \longrightarrow \{-1, 1\}$ such
that $f(N_G[v]) \geq 0$ for all vertices $v$ of $G$. The {\em nonnegative signed
domination number} (NNSDN) of $G$ is
$\gamma^{NN}_{s}(G)= \min\{\omega(f)| f \mbox{ is an NNSDF of } G \}$.

We now introduce a
{\em nonnegative signed $k$-subdominating function} (NNSkSDF) of $G$ for a positive integer $1 \leq k \leq n$ as a function $f :
V(G) \longrightarrow \{-1, 1\}$ such that $f(N_G[v]) \geq 0$ for at
least $k$ vertices $v$ of $G$. We define the {\em nonnegative signed  $k$-subdomination number}
(NNSkSDN) of $G$ by $\gamma^{NN}_{ks}(G)= \min\{\omega(f)\mid f \mbox{ is a NNS}k\mbox{SDF of  }G \}$.  A nonnegative signed $k$-subdominating function of weight $\gamma^{NN}_{ks}(G)$ is called a $\gamma^{NN}_{ks}(G)$-{\em function}. Note that
$\gamma^{NN}_{ns}(G)=\gamma^{NN}_{s}(G)$. Since every signed $k$-subdominating function of $G$ is a nonnegative signed $k$-subdominating function, we deduce that
$$\gamma^{NN}_{ks}(G) \leq \gamma_{ks}(G).$$
For a function $f : V(G)
\longrightarrow \{-1, 1\}$ of $G$ we define $P=\{v\in V(G)\mid
f(v)=1\}$,  $M=\{v\in V(G)\mid f(v)=-1\}$, and $C_f=\{v \in V(G)| f (N_G[v])\geq 0 \}$.

In this paper, we establish some new  lower bounds on $\gamma^{NN}_s(G)$  for a general
graph in terms of various different graph parameters. Some of these bounds improve several lower bounds on $\gamma^{NN}_s(G)$ presented in \cite{AS,HLFZ}. We also
initiate the study of nonnegative signed $k$-subdomination numbers in graphs, and present some sharp lower bounds for $\gamma^{NN}_{ks}(G)$ in terms of the
order and the degree sequence of a graph $G$.
%%%%%%%%%%%%%%%%%%%%%%%%%%%%%%%%%%%%%%%%%%%%%%%%%%%%%%%%%%%%%%%%%%%%%%%%%%%%%%%%%%%%%%%%%%%%%%%%%%%%%%%%
\begin{observation}\label{ob}
{\rm Let $f$ be an NNSkSDF of $G$. For $v \in C_f$ if $v$ is an even vertex, then $f(N_G[v]) \geq 1$ while $f(N_G[v]) \geq 0$ if  $v$ is an odd vertex. }
\end{observation}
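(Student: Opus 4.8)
The plan is to reduce everything to a parity argument on $|N_G[v]|$. For any $v\in C_f$ the definition of $C_f$ gives $f(N_G[v])\ge 0$ directly, so the odd-vertex half of the statement needs nothing further; the real content is the sharpening $f(N_G[v])\ge 1$ at even vertices.

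First I would rewrite the local sum in terms of the sets $P$ and $M$ introduced above: putting $p=|P\cap N_G[v]|$ and $q=|M\cap N_G[v]|$, we have $p+q=|N_G[v]|=\deg_G(v)+1$ and $f(N_G[v])=p-q=(\deg_G(v)+1)-2q$. Hence $f(N_G[v])\equiv \deg_G(v)+1\pmod 2$; in other words, a $\{-1,1\}$-valued sum over a set has the same parity as the cardinality of that set. If $v$ is an even vertex, then $\deg_G(v)+1$ is odd, so $f(N_G[v])$ is an odd integer, and being also $\ge 0$ it must be at least $1$. If $v$ is an odd vertex, then $\deg_G(v)+1$ is even, so $f(N_G[v])$ is even and the bound $f(N_G[v])\ge 0$ is all that parity yields.

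There is no genuine obstacle here; the only point worth stating carefully is the congruence $f(N_G[v])\equiv |N_G[v]|\pmod 2$, after which both cases are immediate. This observation is precisely the device that later lets one extract an extra $+1$ of weight from each covered closed neighborhood of even cardinality.
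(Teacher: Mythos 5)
Your proof is correct, and it is exactly the (implicit) argument behind the paper's unproved observation: since $f$ takes values in $\{-1,1\}$, the sum $f(N_G[v])$ has the same parity as $|N_G[v]|=\deg_G(v)+1$, so the nonnegativity from $v\in C_f$ upgrades to $f(N_G[v])\ge 1$ precisely when $\deg_G(v)$ is even. Nothing further is needed.
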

\begin{observation}\label{Signed}
{\rm Let $1\leq k\leq n$ be a positive integer. For any even graph $G$, $$\gamma^{NN}_{ks}(G)=\gamma_{ks}(G).$$ }
\end{observation}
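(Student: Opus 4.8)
The plan is to prove the two inequalities $\gamma^{NN}_{ks}(G)\le\gamma_{ks}(G)$ and $\gamma^{NN}_{ks}(G)\ge\gamma_{ks}(G)$ separately. The first is immediate and in fact holds for every graph: as already observed in the excerpt, every signed $k$-subdominating function of $G$ is a nonnegative signed $k$-subdominating function of $G$, so the minimum weight cannot increase when we enlarge the class of admissible functions, which gives $\gamma^{NN}_{ks}(G)\le\gamma_{ks}(G)$.

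For the reverse inequality I would show that, when $G$ is even, the family of NNSkSDFs of $G$ coincides with the family of SkSDFs of $G$. Fix any $f:V(G)\longrightarrow\{-1,1\}$ and any vertex $v$. Since every vertex of $G$ has even degree, $|N_G[v]|=\deg_G(v)+1$ is odd, so $f(N_G[v])$ is a sum of an odd number of terms each equal to $\pm1$ and is therefore an odd integer. Hence $f(N_G[v])\ge0$ holds if and only if $f(N_G[v])\ge1$ holds; this is exactly Observation~\ref{ob} specialized to the situation where every vertex is even. Consequently the set $C_f=\{v\in V(G)\mid f(N_G[v])\ge0\}$ equals $\{v\in V(G)\mid f(N_G[v])\ge1\}$, so the condition $|C_f|\ge k$ is the same as the condition that $f$ be an SkSDF of $G$. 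Thus $f$ is an NNSkSDF of $G$ if and only if it is an SkSDF of $G$.

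Since the two families of functions are literally identical, the minima of $\omega$ taken over them are equal, which yields $\gamma^{NN}_{ks}(G)=\gamma_{ks}(G)$. There is essentially no obstacle here: the only point requiring care is the parity bookkeeping, namely that $G$ being even forces every closed neighborhood to have odd cardinality; once that is in place the equivalence $f(N_G[v])\ge0\Leftrightarrow f(N_G[v])\ge1$ is automatic and the statement follows at once.
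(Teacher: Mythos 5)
Your proof is correct and matches the paper's intent: the paper states this as an immediate consequence of Observation~\ref{ob} (the parity of $|N_G[v]|=\deg_G(v)+1$ forces $f(N_G[v])$ to be odd when $v$ is even, so $\ge 0$ and $\ge 1$ are equivalent conditions), which is exactly the argument you give. Nothing further is needed.
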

In this paper, we make use of the following results.
\begin{prelem}\cite{AS}\label{Ata3}
{\rm Let   $G$ be a graph of order $n$ and size $m$. Then
$$\gamma^{NN}_{s}(G) \geq \frac{n}{2}-m.$$
}
\end{prelem}

\begin{prelem}\cite{AS}\label{Ata5}
{\rm Let   $G$ be a graph of order $n$, size $m$ and minimum degree $\delta$. Then
$$\gamma^{NN}_{s}(G) \geq \frac{-4m+3n\lceil\frac{\delta+1}{2}\rceil-n}{3\lceil\frac{\delta+1}{2}\rceil+1}.$$
}
\end{prelem}

\begin{prelem}\cite{DHHS}\label{cycle}
{\rm For $n \geq 3$, $$\gamma_s(C_n)=\left\{
\begin{array}{lcl}
n/3  & \mbox{if} &  n \equiv 0 \pmod  3, \\
\lfloor\frac{n}{3}\rfloor+1 & \mbox{if} & n \equiv 1 \pmod  3, \\
\lfloor\frac{n}{3}\rfloor+2 & \mbox{if} & n \equiv 2 \pmod  3. \\
\end{array} \right. $$ }
\end{prelem}
\begin{corollary}\cite{H}\label{Hen}
{\rm For any  $r$-regular graph $G$ of order $n$,
$\gamma_{s}(G) \geq \dfrac{n}{r+1}$, for $r$ even.
Furthermore this bound is sharp.}
\end{corollary}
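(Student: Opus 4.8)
The plan is to prove the lower bound by a short double-counting argument and then to exhibit a family of graphs attaining equality. Let $G$ be an $r$-regular graph of order $n$ with $r$ even, and let $f$ be an arbitrary signed dominating function of $G$ (in particular one may take a $\gamma_s(G)$-function), so that $f(N_G[v]) = \sum_{u \in N_G[v]} f(u) \geq 1$ for every $v \in V(G)$. I would compute $\sum_{v \in V(G)} f(N_G[v])$ in two ways. Since every summand is at least $1$, the total is at least $n$; on the other hand, interchanging the order of summation, each value $f(u)$ is counted once for every $v$ with $u \in N_G[v]$, that is, for every $v \in N_G[u]$, and since $G$ is $r$-regular there are $|N_G[u]| = r+1$ such vertices. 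Hence
$$\sum_{v \in V(G)} f(N_G[v]) \;=\; \sum_{u \in V(G)} (r+1)\, f(u) \;=\; (r+1)\,\omega(f).$$

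Combining the two computations gives $(r+1)\,\omega(f) \geq n$, i.e. $\omega(f) \geq \tfrac{n}{r+1}$; since $f$ was an arbitrary signed dominating function, $\gamma_s(G) \geq \tfrac{n}{r+1}$. I would remark that this inequality actually holds for every $r$, and that the hypothesis "$r$ even" is used only to guarantee sharpness: when $r$ is odd, $|N_G[v]| = r+1$ is even, so each $f(N_G[v])$ is an even integer and the constraint $f(N_G[v]) \geq 1$ forces $f(N_G[v]) \geq 2$, yielding the strictly stronger bound $\omega(f) \geq \tfrac{2n}{r+1}$.

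For sharpness, I would take $G = K_{r+1}$, or more generally any vertex-disjoint union of copies of $K_{r+1}$; this is $r$-regular of order $n$ with $r+1 \mid n$. In $K_{r+1}$ one has $N_G[v] = V(G)$ for every vertex $v$, so a function $f : V(G) \to \{-1,1\}$ is a signed dominating function exactly when $\omega(f) \geq 1$. Writing $\omega(f) = |P| - |M| = n - 2|M|$ and using that $r+1$ is odd (because $r$ is even), the smallest admissible value is $\omega(f) = 1$, achieved by choosing $|M| = r/2$. Thus each copy contributes exactly $1$, so $\gamma_s(G) = \tfrac{n}{r+1}$, matching the lower bound.

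The only step needing care is the bookkeeping in the double count: getting the multiplicity $|N_G[u]| = r+1$ right and noticing that the defining condition "$f(N_G[v]) \geq 1$ for all $v$" is precisely what licenses the estimate $\sum_{v} f(N_G[v]) \geq n$. The sharpness half is then routine once one observes that the oddness of $r+1$ permits a $\{-1,1\}$-weighting of $K_{r+1}$ with total weight exactly $1$.
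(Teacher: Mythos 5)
Your argument is correct and complete: the double count $\sum_{v} f(N_G[v]) = (r+1)\,\omega(f) \geq n$ is exactly the standard proof of this bound, and your sharpness example (disjoint unions of $K_{r+1}$, where the signed domination condition collapses to $\omega(f)\geq 1$ on each odd-order component) is valid. The paper itself quotes this result from Henning's article without proof, so there is nothing to compare against beyond noting that your proof is the classical one; your parenthetical observation that evenness of $r$ is needed only for sharpness (since for odd $r$ parity forces $f(N_G[v])\geq 2$ and hence the stronger bound $2n/(r+1)$) is also accurate.
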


\begin{prelem}\cite{HLFZ}\label{Hua6}
{\rm Let $K_n$ be a complete graph. Then  $\gamma^{NN}_{s}(K_n) =0$ when $n$ is even and $\gamma^{NN}_{s}(K_n) =1$ when $n$ is odd.
}
\end{prelem}
\begin{prelem}\cite{HLFZ}\label{Hua10}
{\rm For any graph $G$ with maximum degree $\Delta$ and minimum degree $\delta$, we have $$\gamma^{NN}_{s}(G) \geq \frac{\delta-\Delta}{\delta+\Delta+2}n.$$
}
\end{prelem}
%%%%%%%%%%%%%%%%%%%%%%%%%%%%%%%%%%%%%%%%%%%%%%%%%%%
\section{Lower bounds on the  NNSDNs of graphs }
In this section, we present some new sharp lower bounds for $\gamma^{NN}_{s}(G)$  by using $n_e$ as  the number of  even vertices in a graph $G$.  We begin with the following lemma.

\begin{lemma}\label{lemNN}

{\rm Let $f$ be an  NNSDF of a simple connected graph $G$. Then,

\begin{enumerate}
\item $\sum_{v\in P} \deg_{G}(v) \geq  n+n_e-2\mid P \mid +\sum_{v \in M} \deg_{G}(v)$.
\item $\sum_{v \in P}\deg_{G[P]}(v)\geq \sum_{v \in
P}\lceil \frac {\deg_{G}(v)-1}{2}\rceil$.
\end{enumerate}
}
\end{lemma}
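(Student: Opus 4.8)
The plan is to exploit the defining inequalities $f(N_G[v])\ge 0$, sharpened at even vertices to $f(N_G[v])\ge 1$ by Observation~\ref{ob}, and to combine a purely local estimate (for part~2) with a global double-counting identity (for part~1). Throughout, for a vertex $v$ write $p(v)=|N_G(v)\cap P|$ and $m(v)=|N_G(v)\cap M|$, so that $p(v)+m(v)=\deg_G(v)$, and note that for $v\in P$ one has $\deg_{G[P]}(v)=p(v)$ and $f(N_G[v])=1+p(v)-m(v)$.

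For part~2, fix $v\in P$. The hypothesis $f(N_G[v])\ge 0$ reads $1+p(v)-m(v)\ge 0$, i.e.\ $2p(v)\ge \deg_G(v)-1$. Since $p(v)$ is an integer, this forces $p(v)\ge\lceil(\deg_G(v)-1)/2\rceil$, and summing over $v\in P$ gives the claim. (One could instead split into the even/odd cases using Observation~\ref{ob}, but the integrality of $p(v)$ makes this unnecessary.)

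For part~1, I would evaluate $\sum_{v\in V(G)}f(N_G[v])$ in two ways. Expanding $f(N_G[v])=f(v)+\sum_{u\sim v}f(u)$ and interchanging the order of summation — a weighted handshake count — yields
$$\sum_{v\in V(G)}f(N_G[v])=\omega(f)+\sum_{v\in V(G)}f(v)\deg_G(v)=\bigl(|P|-|M|\bigr)+\sum_{v\in P}\deg_G(v)-\sum_{v\in M}\deg_G(v).$$
On the other hand, by Observation~\ref{ob} every even vertex contributes at least $1$ and every odd vertex at least $0$ to this sum, so $\sum_{v\in V(G)}f(N_G[v])\ge n_e$. Combining the two, substituting $|P|-|M|=2|P|-n$, and rearranging gives precisely $\sum_{v\in P}\deg_G(v)\ge n+n_e-2|P|+\sum_{v\in M}\deg_G(v)$.

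I do not expect a serious obstacle: the argument reduces to a weighted degree identity plus a parity observation. The only points demanding care are deriving the ceiling in part~2 from the integrality of $p(v)$, and tracking the parity bookkeeping in part~1 correctly — in particular, it is the improved bound $\sum_v f(N_G[v])\ge n_e$ (rather than merely $\ge 0$) that makes the statement nontrivial. I also note that connectedness of $G$ is not actually used in either part; it is presumably assumed for consistency with the surrounding results.
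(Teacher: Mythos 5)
Your proof is correct, and part~2 coincides with the paper's argument (the paper likewise derives $\deg_P(v)\ge\lceil(\deg_G(v)-1)/2\rceil$ from $\deg_M(v)\le\deg_P(v)+1$ and sums over $P$). For part~1 you take a genuinely different, and cleaner, route. The paper converts the constraint $f(N_G[v])\ge 0$ into the per-vertex degree inequalities $\deg_M(v)\le\lfloor(\deg_G(v)+1)/2\rfloor$ for $v\in P$ and $\deg_P(v)\ge\lceil(\deg_G(v)+1)/2\rceil$ for $v\in M$, double-counts $|E(P,M)|$ to get $\sum_{v\in M}\lceil(\deg_G(v)+1)/2\rceil\le\sum_{v\in P}\lfloor(\deg_G(v)+1)/2\rfloor$, adds $\sum_{v\in P}\lceil(\deg_G(v)+1)/2\rceil$ to both sides, and then evaluates $\sum_{v\in V}\lceil(\deg_G(v)+1)/2\rceil=\frac{1}{2}\sum_{v\in V}\deg_G(v)+n_e+\frac{n_o}{2}$ by splitting into even and odd vertices. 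You instead sum $f(N_G[v])$ over all vertices, use the weighted handshake identity $\sum_{v}f(N_G[v])=\omega(f)+\sum_v f(v)\deg_G(v)$, and bound the left side below by $n_e$ via the parity observation; the ceiling/floor bookkeeping disappears and the role of $n_e$ becomes transparent. The two arguments are equivalent in substance (both are forms of double counting over edges weighted by $f$), but yours is shorter and makes it clear where the gain of $n_e$ over the naive bound comes from. Your closing remark is also accurate: connectedness of $G$ is not used in either proof of this lemma.
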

\begin{proof}
For $v \in V(G)$, let $\deg_P(v)$ and $\deg_M(v)$ denote the numbers of vertices of $P$ and $M$, respectively, which are adjacent
to $v$.  Clearly, $\deg_{G}(v)=\deg_{M}(v)+\deg_{P}(v)$.
Since $f(N_G[v]) \geq 0$,  for every $v \in P$, $\deg_{M}(v) \leq
\deg_{P}(v)+1$, and for every  $v \in M$,   $\deg_{P}(v) \geq
\deg_{M}(v)+1$. Hence, if $v \in P$, then
$\deg_{M}(v) \leq \lfloor \frac{\deg_{G}(v)+1}{2}\rfloor$ and if $v
\in M$, then $\deg_{P}(v) \geq \lceil
\frac{\deg_{G}(v)+1}{2}\rceil$.
\begin{enumerate}
\item Counting the number of edges in $E(P,M)$
in two ways, we can deduce that
$$\begin{array}{ccccl}
 \sum_{v \in M} \lceil \frac{\deg_{G}(v)+1}{2} \rceil & \leq &  |E(P,M)| & \leq &
 \sum_{v \in P} \lfloor \frac{\deg_{G}(v)+1}{2} \rfloor
  \end{array}$$
\noindent It follows that
$$\begin{array}{ccl}
\sum_{v \in P}\deg_{G}(v)+ \mid P \mid & \geq & \sum_{v \in V} \lceil\frac{\deg_{G}(v)+1}{2}\rceil\\\\
 &=&\sum_{v \in V_o}\frac{\deg_G(v)+1}{2}+\sum_{v \in V_e}\frac{\deg_G(v)+2}{2}\\\\
 &=&\sum_{v \in V}\frac{\deg_G(v)}{2}+n_e+\frac{n_o}{2}\\\\
  &=&\sum_{v \in P}\frac{\deg_G(v)}{2}+\sum_{v \in M}\frac{\deg_G(v)}{2}+n_e+\frac{n_o}{2},\\
\end{array}$$

which implies that
$$\sum_{v \in P}\frac{\deg_G(v)}{2} \geq  \sum_{v \in M}\frac{\deg_G(v)}{2}+n_e+\frac{n_o}{2}-\mid P\mid.$$ Hence,
$$\begin{array}{ccl}
 \sum_{v\in P} \deg_{G}(v)& \geq & \sum_{v \in M} \deg_{G}(v)+n+n_e-2\mid P \mid.\\
\end{array}$$
\item Consider the subgraph $G[P]$ induced by $P$. We have
$\deg_{G[P]}(v) = \deg_{P}(v)$ for each $v \in P$. Since
$\deg_P(v) \geq \lceil \frac {\deg_{G}(v)-1}{2}\rceil$ for each $v
\in P$, we have
$$\sum_{v \in P}\deg_{G[P]}(v)\geq \sum_{v \in P}\lceil \frac
{\deg_{G}(v)-1}{2}\rceil.$$
\end{enumerate}
\end{proof}
In the next theorem we  present  some lower bounds on
$ \gamma^{NN}_{s}(G)$. By using Lemma \ref{lemNN} and graph parameters such as order, size, number of even vertices, maximum and minimum degrees
we obtain some new lower bounds for $\gamma^{NN}_{s}(G)$. These new results are independent from each other.
\begin{theorem}\label{theoNN}
{\rm Let $G$ be a simple connected graph of
order $n$, minimum degree $\delta$, maximum degree $\Delta$ and the number of even vertices $n_e$.
Then
\begin{enumerate}
\item $\gamma^{NN}_{s}(G) \geq \dfrac{n\delta-n\Delta+2n_e}{\Delta+\delta+2}$,
\\
\item  $\gamma^{NN}_{s} (G) \geq \dfrac{2m+n_e-n\Delta}{\Delta+1}$,
\\
\item $\gamma^{NN}_{s} (G)\geq \dfrac{n\delta+n_e-2m}{\delta+1} $,
\\
\item  $\gamma^{NN}_{s} (G)\geq \lceil\dfrac{-(\delta+1)+ \sqrt{(\delta+1)^2 +8(n\delta +n+n_e)}}{2}-n\rceil$,
\\
\item  $\gamma^{NN}_{s} (G) \geq \lceil\sqrt{2m+n+n_e}-n\rceil$.
\end{enumerate}
}
\end{theorem}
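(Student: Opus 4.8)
The plan is to fix a $\gamma^{NN}_{s}(G)$-function $f$ and work with its positive and negative sets $P=\{v:f(v)=1\}$ and $M=\{v:f(v)=-1\}$. Putting $p=|P|$, we have $|M|=n-p$, and since each vertex carries value $\pm 1$, $\gamma^{NN}_{s}(G)=\omega(f)=p-(n-p)=2p-n$, so $p=\frac{n+\gamma^{NN}_{s}(G)}{2}$; in particular $n$ and $\gamma^{NN}_{s}(G)$ have the same parity, which legitimizes the ceilings in (4) and (5), and $p$ is a genuine integer there. Thus each of the five inequalities follows from a matching lower bound on $p$. Throughout, write $S_P=\sum_{v\in P}\deg_{G}(v)$ and $S_M=\sum_{v\in M}\deg_{G}(v)$, so that $S_P+S_M=2m$ and, by part (1) of Lemma~\ref{lemNN}, $S_P-S_M\ge n+n_e-2p$.

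For (1)--(3) these two relations, together with the crude degree estimates $S_P\le \Delta p$ and $S_M\ge \delta(n-p)$, are all that is needed. For (1), combining $S_P-S_M\ge n+n_e-2p$ with $S_P\le\Delta p$ and $S_M\ge\delta(n-p)$ gives $(\Delta+\delta+2)\,p\ge n(\delta+1)+n_e$. For (2), adding $S_P+S_M=2m$ to $S_P-S_M\ge n+n_e-2p$ yields $2S_P\ge 2m+n+n_e-2p$, whence with $S_P\le\Delta p$ one gets $(\Delta+1)\,p\ge m+\tfrac{n+n_e}{2}$. For (3), subtracting instead gives $2S_M\le 2m-(n+n_e)+2p$, whence with $S_M\ge\delta(n-p)$ one gets $(\delta+1)\,p\ge \delta n+\tfrac{n+n_e}{2}-m$. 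In each case substitute $p=\frac{n+\gamma^{NN}_{s}(G)}{2}$ and simplify.

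For (4) and (5) I would bring in part (2) of Lemma~\ref{lemNN} together with the elementary facts that $\sum_{v\in P}\deg_{G[P]}(v)=2|E(G[P])|\le p(p-1)$ and $\lceil\frac{d-1}{2}\rceil\ge\frac{d-1}{2}$; chaining these gives $p(p-1)\ge \tfrac12 S_P-\tfrac12 p$. For (4), lower-bound $S_P$ via part (1) of the lemma and $S_M\ge\delta(n-p)$, namely $S_P\ge n(\delta+1)+n_e-(\delta+2)p$; clearing denominators turns the resulting inequality into the quadratic $2p^{2}+(\delta+1)p-(n\delta+n+n_e)\ge 0$, and taking its positive root, then substituting $p=\frac{n+\gamma^{NN}_{s}(G)}{2}$ and using integrality of $\gamma^{NN}_{s}(G)$, yields the stated bound. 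For (5), use instead $S_P\ge m+\tfrac{n+n_e}{2}-p$ (from $S_P+S_M=2m$ and part (1) of the lemma), which reduces to $4p^{2}\ge 2m+n+n_e$; again substitute and round up.

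None of this is deep; the one thing to watch is keeping the two arguments apart — the linear estimates (1)--(3), which use only part (1) of Lemma~\ref{lemNN} and the trivial bounds $S_P\le\Delta p$, $S_M\ge\delta(n-p)$, versus the quadratic estimates (4)--(5), which rest on controlling $|E(G[P])|$ through part (2) of the lemma and then solving a quadratic in $p$. The mildly fiddly point is the algebra in (4): one must expand $2p(p-1)$, absorb the $(\delta+2)p$ and $p$ terms correctly, and check that the discriminant comes out as $(\delta+1)^{2}+8(n\delta+n+n_e)$ before converting the root bound on $p$ into the bound on $\gamma^{NN}_{s}(G)$. Sharpness (and the mutual independence of the five bounds asserted before the theorem) would then be a separate matter of exhibiting suitable example graphs, e.g. complete graphs via Theorem~\ref{Hua6} and small regular or near-regular graphs.
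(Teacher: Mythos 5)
Your proposal is correct and follows essentially the same route as the paper: part (1) of Lemma~\ref{lemNN} plus the trivial bounds $S_P\le\Delta|P|$, $S_M\ge\delta(n-|P|)$ for the three linear bounds, and part (2) of the lemma with $\sum_{v\in P}\deg_{G[P]}(v)\le |P|(|P|-1)$ for the two quadratic ones; all your intermediate inequalities check out against the paper's. The only differences are notational.
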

\begin{proof}
According to Lemma \ref{lemNN}, we have
\begin{equation}\label{eq1}
\sum_{v \in M}\deg_G(v)+n+n_e-2 \mid P \mid \leq \sum_{v \in P}\deg_G(v).
\end{equation}
\begin{enumerate}
\item  \label{1} Since  $\delta \leq \deg_G(v) \leq \Delta$ for each $v \in V(G)$,  inequality (\ref{eq1}) follows that
$$\delta n- \mid P \mid\delta+n+n_e-2\mid P \mid\leq \sum_{v\in P}\deg_{G}(v) \leq \Delta\mid P\mid.$$

From this inequality, it is deduced that
$$\mid P \mid \geq \frac{\delta n +n+n_e}{\Delta+\delta+2}.$$

Hence, $$\gamma^{NN}_{s} (G)=2\mid P \mid-n \geq
\dfrac{n\delta-n\Delta+2n_e}{\Delta+\delta+2}.$$
\item \label{2} Since $2m=\sum_{v \in V} \deg_G(v)$ and
$\deg_G(v) \leq \Delta$ for each $v \in V(G)$, by inequality
(\ref{eq1}) it follows that
$$\begin{array}{ccl}
 2m+n+n_e-2\mid P \mid & = & \sum_{v \in V}\deg_{G}(v)+n+n_e-2\mid P \mid \\\\
           & \leq & 2\sum_{v \in  P}\deg_{G}(v)\\\\
           & \leq & 2\Delta\mid P \mid.
\end{array}$$
Therefore, $2\mid P \mid\geq \dfrac{2m+n+n_e}{\Delta+1}$, and
 $\gamma^{NN}_{s} (G) \geq \dfrac{2m+n_e-n\Delta}{\Delta+1}$, as desired.
 \item  \label{3} Using inequality (\ref{eq1}) and the facts  $2m=\sum_{v \in V}
\deg_G(v)$ and $\deg_G(v) \geq \delta$ for any $v \in V(G)$, we
have
$$\begin{array}{ccl}
2m & = & \sum_{v \in V}\deg_{G}(v)\\\\
  & \geq & 2\sum_{v \in M}\deg_{G}(v)+n+n_e-2\mid P \mid\\\\
& \geq & 2n\delta-2\delta\mid P \mid+n+n_e-2\mid P \mid
\end{array}$$

It follows that
$$2\mid P \mid\geq \dfrac{2n\delta +n+n_e-2m}{\delta+1}.$$

Thus, $\gamma^{NN}_{s} (G)\geq \dfrac{n\delta+n_e-2m}{\delta+1} $, as desired.
\item \label{4} Consider $G[P]$. According to Lemma
\ref{lemNN}, we have
$$\sum_{v \in P}\deg_{G[P]}(v)\geq \sum_{v \in
P}\lceil \dfrac {\deg_{G}(v)-1}{2}\rceil \geq \sum_{v \in P} \dfrac{\deg_{G}(v)-1}{2} .$$ On the other hand,
since $G[P]$ is a simple connected graph,  $$\sum_{v \in P}\deg_{G[P]}(v)\leq
\mid P \mid(\mid P \mid-1).$$ Thus,
$$\begin{array}{ccl}
2\mid P \mid(\mid P \mid-1) &\geq& \sum_{v \in P}\deg_{G}(v) -\mid P \mid\hfill\\\\
%\quad\quad ({\rm by}\;\; (1))
                   & \geq & \sum_{v \in M}\deg_{G}(v)+n+n_e-3\mid P \mid\hfill\\\\
                   & \geq & n\delta-\delta\mid P \mid+n+n_e-3\mid P\mid. \hfill\\\\
                   \end{array}$$
This implies that
$$2\mid P \mid^2+(\delta+1)\mid P \mid-(n\delta+n+n_e)\geq 0.$$
Therefore,
$$2\mid P \mid \geq \frac{-(\delta+1)+ \sqrt{(\delta+1)^2 +8(n\delta +n+n_e)}}{2},$$

and hence  $\gamma^{NN}_{s} (G)\geq \lceil \dfrac{-(\delta+1)+ \sqrt{(\delta+1)^2 +8(n\delta +n+n_e)}}{2}-n\rceil$, as desired.
\item \label{5} By Parts (\ref{4}) and (\ref{2}) we have
$$2\sum_{v \in P}\deg_G(v) \leq 4\mid P \mid^2-2\mid P \mid, $$
and $$2\sum_{v \in P}\deg_G(v) \geq 2m+n+n_e-2\mid P \mid,$$
respectively.  So,
$$4\mid P \mid^2\geq  2m+n+n_e,$$
which implies that
$$ \mid P \mid\geq \frac{\sqrt{2m+n+n_e}}{2}.$$
Thus, $\gamma^{NN}_{s} (G)\geq \lceil\sqrt{2m+n+n_e}-n\rceil $,  as
desired.
\end{enumerate}
\end{proof}
From Theorem \ref{theoNN} (\ref{1})$-$(\ref{3}), we have the
following result. For $k=n$ by Observation \ref{Signed} when $r$ is even, we have the same bound presented in Corollary \ref{Hen} by Henning.
\begin{corollary}\label{regular}
{\rm For $r\geq 1$, if $G$ is an $r$-regular graph of order $n$,
then
$$\gamma^{NN}_{s}(G) \geq \left\{\begin{array}{ll}
\dfrac{n}{r+1} & \mbox{ if } $r$ \mbox{ is even,} \\
&\\
0 & \mbox{ if } $r$ \mbox{ is odd}. \\
\end{array}\right.$$
Furthermore, these bounds are sharp.}
\end{corollary}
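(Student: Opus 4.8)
The plan is to read the result straight off Theorem~\ref{theoNN} by plugging in the parameters of a regular graph, and then to exhibit graphs meeting the two bounds with equality. First I would record that for an $r$-regular graph $G$ of order $n$ one has $\delta=\Delta=r$ and $2m=\sum_{v\in V(G)}\deg_G(v)=nr$. Since every vertex has degree $r$, the parity of $r$ pins down $n_e$: if $r$ is even then every vertex is even, so $n_e=n$, whereas if $r$ is odd then no vertex is even, so $n_e=0$.

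Next I would substitute $\delta=\Delta=r$ and $2m=nr$ into the three bounds of Theorem~\ref{theoNN}(\ref{1})--(\ref{3}). A short computation shows that all three collapse to the single estimate $\gamma^{NN}_{s}(G)\ge n_e/(r+1)$; for instance Theorem~\ref{theoNN}(\ref{1}) gives
$$\gamma^{NN}_{s}(G)\ \ge\ \frac{n\delta-n\Delta+2n_e}{\Delta+\delta+2}\ =\ \frac{2n_e}{2r+2}\ =\ \frac{n_e}{r+1},$$
and parts (\ref{2}) and (\ref{3}) each reduce to $(nr+n_e-nr)/(r+1)=n_e/(r+1)$ as well. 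Setting $n_e=n$ when $r$ is even and $n_e=0$ when $r$ is odd then yields exactly the two displayed lower bounds.

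Finally, for sharpness I would use the complete graph $K_{r+1}$, which is $r$-regular of order $n=r+1$. When $r$ is odd, $r+1$ is even and Theorem~\ref{Hua6} gives $\gamma^{NN}_{s}(K_{r+1})=0$, matching the bound $0$; when $r$ is even, $r+1$ is odd and Theorem~\ref{Hua6} gives $\gamma^{NN}_{s}(K_{r+1})=1=\tfrac{n}{r+1}$, matching the other bound. (For $r=2$ the cycles $C_n$ with $n\equiv 0\pmod{3}$ provide an infinite extremal family via Theorem~\ref{cycle}, and for even $r$ the identity $\gamma^{NN}_{s}(G)=\gamma_{s}(G)$ of Observation~\ref{Signed} is what links the bound to Henning's Corollary~\ref{Hen}.) There is no genuinely hard step here; the only point demanding care is verifying that the three a priori different bounds of Theorem~\ref{theoNN}(\ref{1})--(\ref{3}) really do degenerate to the same quantity on regular graphs, so that the stated attribution to parts (\ref{1})--(\ref{3}) is correct, and, for the odd case, noting that $\gamma^{NN}_{s}(G)\ge 0$ is a nonvacuous statement since an arbitrary $\{-1,1\}$-function can have weight as low as $-n$.
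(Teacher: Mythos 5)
Your proposal is correct and follows essentially the same route as the paper, which simply derives the corollary by specializing Theorem \ref{theoNN}(\ref{1})--(\ref{3}) to $\delta=\Delta=r$, $2m=nr$, and $n_e=n$ or $n_e=0$ according to the parity of $r$. Your sharpness witnesses ($K_{r+1}$ for both parities, and $C_n$ with $n\equiv 0\pmod 3$ for $r=2$) match the examples the paper discusses.
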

%%%%%%%%%%%%%%%%%%%%%%%%%%%%%%%%%%%%%%%%%%%%%%%%%%%
\noindent In order to show that the bounds presented in Theorem \ref{theoNN} are sharp, we will give a graph $G$ and construct a $\gamma^{NN}_s(G)$-function $f$
such that $w(f)$ achieves the lower bounds, and thus the lower bounds are sharp. We also illuminate that our bounds for
some of these graphs are attainable while the corresponding bounds given in Theorems   \ref{Ata3}, \ref{Ata5}, and  \ref{Hua10} are not.
In fact, a trivial examples such  $G \in \{K_n,C_n\}$ is sufficient for this. It is easy to
see that $\gamma^{NN}_s(K_n)$ obtains all the five bounds in Theorem \ref{theoNN} while the bound in Theorem \ref{Ata3} shows that $\gamma^{NN}_s(K_n) \geq \frac{2n-n^2}{2}$  and the bound in Theorem  \ref{Ata5} is not more than  $\frac{5n-n^2}{3n+5}$. As an other example, $\gamma^{NN}_s(C_n)$ attains the lower bounds in
Theorem \ref{theoNN} (\ref{1})$-$(\ref{3}), when $n\equiv 0\;\; (mod\;\;3)$ while the bounds in Theorems \ref{Ata3},  \ref{Ata5} and \ref{Hua10} are
not more than $\frac{n}{7}$. Besides, we can construct a non-complete graph with an arbitrary large order whose  reaches the lower bounds in Theorem \ref{theoNN} (\ref{1})$-$(\ref{3}) as follows. Letting $t$ be a positive integer, we consider a cycle of length $2t$.  For every  edge, we include an additional vertex being adjacent to both endpoints
of the corresponding edge. The obtained graph is denoted by $G$.   It is easy to check that the graph $G$ is a graph with $n=4t$, $m=6t$, $\delta=2$, $\Delta=4$ and $n_e=4t$.
Define a function $f : V(G) \longrightarrow  \{-1, 1\}$ as follows: $f (v)=1$ for $v \in V(C_{2t})$ and $f (v)=-1$ for
 $v \in V(G) \setminus V(C_{2t})$. It is easy to verify that $f$ is a $\gamma^{NN}_s(G)$-function with
$w(f ) = 0$ and  bounds in Theorem \ref{theoNN} (\ref{1})$-$(\ref{3})  are also $0$, which implies that $G$ is sharp for these bounds. However, $\gamma^{NN}_s(G)$ does not attain the corresponding bounds given in Theorems \ref{Ata3}, \ref{Ata5} \ and \ref{Hua10}, which are $-4t$,  $\lceil-\frac{4t}{7}\rceil$, and $-t$, respectively. Next, we show that there is also a graph $G$ different from $K_n$ such that $\gamma^{NN}_s(G)$ reaches  lower bounds in
Theorem \ref{theoNN} (\ref{4})$-$(\ref{5}). Let $H$ be the Haj\'{o}s graph. We can verify that $\gamma^{NN}_s(H)=0$, and  $H$ is sharp for presented bounds in Theorem \ref{theoNN} (\ref{4})$-$(\ref{5}).
%%%%%%%%%%%%%%%%%%%%%%%%%%%%%%%%%%%%%%%%%%%%%%%%%%
\section{Lower  bounds on the NNSkSDNs of graphs}
In this section, we initiate the study of  the nonnegative signed $k$-subdomination number in graphs. we present some lower bounds on the nonnegative signed $k$-subdomination number  of a graph in terms of the order and the degree sequence. We begin with the following lemma.

\begin{lemma}\label{lemkNN}
{\rm Let $G$ be a graph and $1\leq k \leq n$ be a positive integer. Let $f$ be a $\gamma^{NN}_{ks}(G)$-function.
%Let $P =\{v \in V(G) |f (v)=1\}$,  $M=\{v\in V(G) |f (v)=-1\}$.
 Let $P_1=P \cap C_f$, $P_2=P \setminus P_1$, $M_1=M \cap C_f$ and $M_2 =M\setminus M_1$. Then,
$$\sum_{v \in P}\deg_G(v)+\mid P_1 \mid \geq \sum_{v \in P_1 \cup M_1}\lceil \dfrac{\deg_G(v)+1}{2}\rceil.$$
}
\end{lemma}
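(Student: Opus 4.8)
The plan is to mimic the edge-counting argument of Lemma~\ref{lemNN}, but restricted to the vertices that actually satisfy the nonnegativity condition, i.e.\ those lying in $C_f$. For $v\in V(G)$ write $\deg_P(v)$ and $\deg_M(v)$ for the number of neighbours of $v$ in $P$ and in $M$, so that $\deg_G(v)=\deg_P(v)+\deg_M(v)$. First I would extract the local inequalities coming from membership in $C_f$: if $v\in P_1=P\cap C_f$ then $f(N_G[v])=1+\deg_P(v)-\deg_M(v)\ge 0$, which after combining with $\deg_P(v)+\deg_M(v)=\deg_G(v)$ gives $\deg_P(v)\ge\lceil\frac{\deg_G(v)-1}{2}\rceil=\lceil\frac{\deg_G(v)+1}{2}\rceil-1$; and if $v\in M_1=M\cap C_f$ then $f(N_G[v])=-1+\deg_P(v)-\deg_M(v)\ge 0$ forces $\deg_P(v)\ge\lceil\frac{\deg_G(v)+1}{2}\rceil$. (Vertices of $P_2$ and $M_2$ carry no constraint and will simply be discarded.)

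Next I would sum these two families of inequalities over $P_1$ and $M_1$ respectively. The term $-1$ appears exactly $|P_1|$ times, so
$$\sum_{v\in P_1\cup M_1}\deg_P(v)\ \ge\ \sum_{v\in P_1\cup M_1}\left\lceil\frac{\deg_G(v)+1}{2}\right\rceil-|P_1|,$$
equivalently $\sum_{v\in P_1\cup M_1}\deg_P(v)+|P_1|\ge\sum_{v\in P_1\cup M_1}\lceil\frac{\deg_G(v)+1}{2}\rceil$. It therefore suffices to show that the left-hand side is at most $\sum_{v\in P}\deg_G(v)+|P_1|$, i.e.\ that $\sum_{v\in P_1\cup M_1}\deg_P(v)\le\sum_{v\in P}\deg_G(v)$.

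For the last step I would split $\sum_{v\in P_1\cup M_1}\deg_P(v)=\sum_{v\in P_1}\deg_P(v)+\sum_{v\in M_1}\deg_P(v)$ and bound each piece by enlarging the index set: since $P_1\subseteq P$ and $\deg_P(\cdot)\ge 0$, the first sum is at most $\sum_{v\in P}\deg_P(v)$; since $M_1\subseteq M$, the second sum is at most $\sum_{v\in M}\deg_P(v)=|E(P,M)|=\sum_{v\in P}\deg_M(v)$ by counting the edges between $P$ and $M$ in two ways. Adding gives $\sum_{v\in P_1\cup M_1}\deg_P(v)\le\sum_{v\in P}(\deg_P(v)+\deg_M(v))=\sum_{v\in P}\deg_G(v)$, and combining with the displayed inequality above yields the claim. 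The only delicate point — the ``main obstacle'' — is this final double-counting bound: one must be careful that edges inside $P_1$ and edges between $M_1$ and $P$ are not over- or under-counted, which is why the clean route is to pass to the supersets $P$ and $M$ and use the exact identity $\sum_{v\in M}\deg_P(v)=\sum_{v\in P}\deg_M(v)$; everything else is routine manipulation of the ceiling function.
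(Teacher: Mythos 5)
Your proposal is correct and follows essentially the same route as the paper: both extract the local inequalities $\deg_P(v)\ge\lceil\frac{\deg_G(v)+1}{2}\rceil-1$ for $v\in P_1$ and $\deg_P(v)\ge\lceil\frac{\deg_G(v)+1}{2}\rceil$ for $v\in M_1$ from $f(N_G[v])\ge 0$, and both close the argument by double-counting $E(P,M)$ via $\sum_{v\in M}\deg_P(v)=\sum_{v\in P}\deg_M(v)$. The only difference is bookkeeping: you sum the $\deg_P$ lower bounds over $P_1\cup M_1$ and then pass to the supersets $P$ and $M$, whereas the paper sandwiches $|E(P,M)|$ between the $M_1,M_2$ and $P_1,P_2$ contributions and adds $\sum_{v\in P_1}\lceil\frac{\deg_G(v)+1}{2}\rceil$ to both sides — the content is identical.
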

\begin{proof}
Note that if $v \in V(G)$, then $\deg_G(v)=\deg_P(v)+\deg_M(v)$.  For $v \in P_1 \cup M_1$, $f (N_G[v])\geq0$. So, if  $v \in P_1$, then $\deg_P(v) \geq \lceil\frac{\deg_G(v)-1}{2}\rceil$ and $\deg_M(v) \leq \lfloor\frac{\deg_G(v)+1}{2}\rfloor$. Similarly, if $v \in M_1$, then $\deg_P(v) \geq \lceil\frac{\deg_G(v)+1}{2}\rceil$ and $\deg_M(v) \leq \lfloor\frac{\deg_G(v)-1}{2}\rfloor$.

Counting the number of edges in $E(P,M)$
in two ways, we  conclude that
$$\sum_{v \in M_1}\lceil\frac{\deg_G(v)+1}{2}\rceil+\sum_{v \in M_2}\deg_P(v) \leq \sum_{v \in P_1}\lfloor\frac{\deg_G(v)+1}{2}\rfloor+\sum_{v \in P_2}\deg_M(v).$$
By adding $\sum_{v \in P_1}\lceil\frac{\deg_G(v)+1}{2}\rceil$ to the both sides of the inequality we have
$$\begin{array}{ccc}
\sum_{v \in P}\deg_G(v)+\mid P_1 \mid &\geq& \sum_{v \in P_1 \cup M_1}\lceil\dfrac{\deg_G(v)+1}{2}\rceil+ \sum_{v \in M_2}\deg_G(v)\hfill\\\\
&\geq& \sum_{v \in P_1 \cup M_1}\lceil\dfrac{\deg_G(v)+1}{2}\rceil,\hfill\\
\end{array}$$
\noindent and this completes the proof.
\end{proof}
\begin{theorem}\label{theokNN}
{\rm For any graph $G$ with degree sequence $d_1\leq d_2 \leq \cdots \leq d_n$ and any positive integer $1 \leq k \leq n$,
\begin{enumerate}

\item[1.] $\gamma^{NN}_{ks}(G) \geq \dfrac{2\sum_{i=1}^{k}\lceil \frac{d_i+1}{2}\rceil}{\Delta+1}-n$.

\item[2.] $\gamma^{NN}_{ks}(G) \geq \dfrac{n\delta-4m-n+2\sum_{i=1}^{k}\lceil \frac{d_i+1}{2}\rceil}{\delta+1}$.\\
Furthermore, these bounds are sharp.
\end{enumerate}
}
\end{theorem}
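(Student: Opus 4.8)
The plan is to derive both inequalities from Lemma~\ref{lemkNN}, which bounds $\sum_{v\in P}\deg_G(v)+|P_1|$ from below by $\sum_{v\in P_1\cup M_1}\lceil\frac{\deg_G(v)+1}{2}\rceil$. The starting point is that $P_1\cup M_1=C_f$, and since $f$ is a $\gamma^{NN}_{ks}(G)$-function, $|C_f|\ge k$. I would argue that, to get the strongest bound from the right-hand side, the sum $\sum_{v\in C_f}\lceil\frac{\deg_G(v)+1}{2}\rceil$ over \emph{any} set of at least $k$ vertices is minimized by taking the $k$ vertices of smallest degree (the function $d\mapsto\lceil\frac{d+1}{2}\rceil$ is nondecreasing, and dropping vertices beyond the $k$ smallest only decreases the sum since each term is nonnegative). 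Hence
\[
\sum_{v\in P_1\cup M_1}\Bigl\lceil\tfrac{\deg_G(v)+1}{2}\Bigr\rceil \;\ge\; \sum_{i=1}^{k}\Bigl\lceil\tfrac{d_i+1}{2}\Bigr\rceil .
\]

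For Part 1, I would bound the left-hand side of the Lemma from above: $\deg_G(v)\le\Delta$ for every $v$, so $\sum_{v\in P}\deg_G(v)\le\Delta|P|$, and $|P_1|\le|P|$, giving $\sum_{v\in P}\deg_G(v)+|P_1|\le(\Delta+1)|P|$. Combining with the lower bound above yields $(\Delta+1)|P|\ge\sum_{i=1}^{k}\lceil\frac{d_i+1}{2}\rceil$, hence $2|P|\ge\frac{2\sum_{i=1}^k\lceil(d_i+1)/2\rceil}{\Delta+1}$, and since $\gamma^{NN}_{ks}(G)=\omega(f)=|P|-|M|=2|P|-n$, the first inequality follows.

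For Part 2, I need to involve $m$ and $\delta$. Here I would rewrite the left-hand side of the Lemma as $\sum_{v\in P}\deg_G(v)+|P_1| = 2m-\sum_{v\in M}\deg_G(v)+|P_1|$, then use $\deg_G(v)\ge\delta$ for $v\in M$ to get $\sum_{v\in M}\deg_G(v)\ge\delta|M|=\delta(n-|P|)$, and $|P_1|\le|P|$. This gives $2m-\delta(n-|P|)+|P|\ge\sum_{i=1}^k\lceil\frac{d_i+1}{2}\rceil$, i.e. $(\delta+1)|P|\ge n\delta-2m+\sum_{i=1}^k\lceil\frac{d_i+1}{2}\rceil$. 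Doubling and subtracting $n$, with $\gamma^{NN}_{ks}(G)=2|P|-n$, yields
\[
\gamma^{NN}_{ks}(G)\ge \frac{2n\delta-4m+2\sum_{i=1}^k\lceil\frac{d_i+1}{2}\rceil}{\delta+1}-n = \frac{n\delta-4m-n+2\sum_{i=1}^k\lceil\frac{d_i+1}{2}\rceil}{\delta+1},
\]
which is exactly the claimed bound. The main obstacle I anticipate is the sharpness claim: I would need to exhibit a family of graphs (and values of $k$) for which both bounds are attained with equality, which forces all the inequalities above to be tight simultaneously---$\deg_G(v)=\Delta$ on $P$ and $P_1=P$ for Part 1, $\deg_G(v)=\delta$ on $M$ and $P_1=P$ for Part 2, plus $C_f$ consisting exactly of the $k$ lowest-degree vertices. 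Regular graphs (where $\Delta=\delta=r$) with a suitable choice of $k$, e.g. complete graphs or cycles as used earlier in the paper, are the natural candidates, and I would verify equality there by an explicit $\gamma^{NN}_{ks}$-function.
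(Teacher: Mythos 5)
Your derivation of both inequalities is correct and is essentially identical to the paper's: both parts combine Lemma \ref{lemkNN} with $P_1\cup M_1=C_f$, $|C_f|\ge k$, and the monotonicity of $d\mapsto\lceil\frac{d+1}{2}\rceil$, then bound $\sum_{v\in P}\deg_G(v)$ above by $\Delta|P|$ (Part 1) or rewrite it as $2m-\sum_{v\in M}\deg_G(v)$ and use $\deg_G(v)\ge\delta$ (Part 2). For the sharpness claim, which you leave as a sketch, the paper takes a shortcut you may want to adopt: when $k=n$ one has $2\sum_{i=1}^{n}\lceil\frac{d_i+1}{2}\rceil=2m+n+n_e$, so the two bounds reduce exactly to parts (2) and (3) of Theorem \ref{theoNN}, whose sharpness was already established there.
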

\begin{proof}
Considering Lemma \ref{lemkNN} we have
\begin{equation}\label{eq31}
\sum_{v \in P}\deg_G(v)+\mid P_1 \mid \geq \sum_{v \in P_1 \cup M_1}\lceil \dfrac{\deg_G(v)+1}{2}\rceil.
\end{equation}

\begin{enumerate}
\item \label{k1} Since $\delta \leq \deg_G(v) \leq \Delta$ for each $v \in V(G)$, inequality (\ref{eq31}) follows that
$$\Delta \mid P \mid+\mid P \mid \geq \sum_{v \in P}\deg_G(v)+\mid P_1 \mid \geq \sum_{v \in P_1 \cup M_1} \lceil\dfrac{\deg_G(v)+1}{2}\rceil.$$
Note that  $P_1 \cap M_1=\emptyset$ and $\mid P_1 \cup M_1 \mid=\mid P_1 \mid+\mid M_1 \mid \geq k$. So,
$$\mid P\mid \geq \dfrac{\sum_{v \in P_1 \cup M_1} \lceil\dfrac{\deg_G(v)+1}{2}\rceil}{\Delta+1} \geq \dfrac{\sum_{i=1}^k\lceil\dfrac{d_i+1}{2}\rceil}{\Delta+1}.$$
Thus,
$$\gamma^{NN}_{ks}(G)=2 \mid P \mid-n \geq \dfrac{2\sum_{i=1}^{k}\lceil \frac{d_i+1}{2}\rceil}{\Delta+1}-n.$$
\item \label{k2} Obviously, , $2m=\sum_{v \in V(G)}\deg_G(v)=\sum_{v \in P}\deg_G(v)+\sum_{v \in M}\deg_G(v)$. If we add $\mid P_1 \mid$ to the both sides of this equality, then by Lemma \ref{lemkNN} we deduce that
$$\begin{array}{ccc}
\mid P \mid \geq \mid P_1 \mid &\geq& -2m+ \sum_{v \in P_1 \cup M_1}\lceil \dfrac{\deg_G(v)+1}{2}\rceil+\sum_{v \in M}\deg_G(v)\hfill\\\\
&\geq& -2m+\sum_{i=1}^{k}\lceil\dfrac{d_i+1}{2}\rceil+\delta n - \delta \mid P\mid.\hfill\\
\end{array}$$
Therefore,
$$\mid P \mid \geq \dfrac{n\delta-2m+\sum_{i=1}^{k}\lceil \frac{d_i+1}{2}\rceil}{\delta+1},$$
and hence,
$$\gamma^{NN}_{ks}(G)=2\mid P \mid -n \geq  \dfrac{n\delta-4m-n+2\sum_{i=1}^{k}\lceil \frac{d_i+1}{2}\rceil}{\delta+1}.$$
\end{enumerate}
Now suppose that $k =n$, considering that $2\sum_{i=1}^n \lceil \dfrac{d_i+1}{2}\rceil =2m+n+n_e$, we can immediately obtain those two bounds in Theorem \ref{theoNN} (\ref{2}) and (\ref{3}) from the lower  bounds of  Theorem
\ref{theokNN}, respectively. Since the bounds in Theorem \ref{theoNN} are sharp, so there exist graphs whose $\gamma^{NN}_{ks}(G)$  recieve the bounds in
Theorem \ref{theokNN}. Therefore, these  bounds  are sharp.
\end{proof}
\noindent As an immediate consequence of Theorem \ref{theokNN} we have the
following result.
\begin{corollary}\cite{HATT}\label{Hatt}
{\rm For every $r$-regular graph $G$ of order $n$,  $\gamma_{ks}(G)\geq   \dfrac{k(r + 2)}{r + 1}-n$ for $r$ even. }
\end{corollary}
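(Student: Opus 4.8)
The plan is to derive Corollary \ref{Hatt} directly from Theorem \ref{theokNN} by specializing to an $r$-regular graph with $r$ even, and then invoking Observation \ref{Signed}. First I would note that for an $r$-regular graph of order $n$ we have $\delta = \Delta = r$, $2m = nr$, and every vertex has degree $r$, so the degree sequence is $d_1 = d_2 = \cdots = d_n = r$. Consequently $\lceil \frac{d_i+1}{2}\rceil = \lceil \frac{r+1}{2}\rceil$ for every $i$, and since $r$ is even this equals $\frac{r+2}{2}$. Hence $\sum_{i=1}^{k}\lceil \frac{d_i+1}{2}\rceil = k \cdot \frac{r+2}{2}$.

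Next I would substitute these values into the first bound of Theorem \ref{theokNN}, namely $\gamma^{NN}_{ks}(G) \geq \frac{2\sum_{i=1}^{k}\lceil \frac{d_i+1}{2}\rceil}{\Delta+1} - n$. The numerator $2\sum_{i=1}^{k}\lceil \frac{d_i+1}{2}\rceil$ becomes $2 \cdot k \cdot \frac{r+2}{2} = k(r+2)$, and the denominator $\Delta + 1$ becomes $r+1$, yielding
$$\gamma^{NN}_{ks}(G) \geq \frac{k(r+2)}{r+1} - n.$$
(One could check the second bound of Theorem \ref{theokNN} also reduces to the same expression, since with $\delta = r$ and $2m = nr$ the numerator $n\delta - 4m - n + 2\sum_{i=1}^{k}\lceil\frac{d_i+1}{2}\rceil = nr - 2nr - n + k(r+2)$ over $\delta+1 = r+1$ simplifies identically; but the first bound alone suffices.)

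Finally I would translate this statement about $\gamma^{NN}_{ks}$ into one about $\gamma_{ks}$. Since $r$ is even, every $r$-regular graph $G$ is an even graph, so by Observation \ref{Signed} we have $\gamma^{NN}_{ks}(G) = \gamma_{ks}(G)$. Combining this with the inequality just derived gives $\gamma_{ks}(G) = \gamma^{NN}_{ks}(G) \geq \frac{k(r+2)}{r+1} - n$, which is exactly Corollary \ref{Hatt}. I do not anticipate any genuine obstacle here: the result is a routine specialization, and the only point requiring a moment's care is the evaluation $\lceil \frac{r+1}{2}\rceil = \frac{r+2}{2}$ for even $r$, together with remembering to apply Observation \ref{Signed} to pass from the nonnegative signed parameter to the ordinary signed $k$-subdomination parameter.
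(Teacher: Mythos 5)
Your proposal is correct and follows exactly the paper's intended derivation: the paper states this as an immediate consequence of Theorem \ref{theokNN} (via the even case of Corollary \ref{kregular}, where $\lceil\frac{r+1}{2}\rceil=\frac{r+2}{2}$) combined with Observation \ref{Signed}, which is precisely your argument. No discrepancies.
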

\begin{corollary}\label{kregular}
{\rm For $r\geq 1$, if $G$ is an $r$-regular graph of order $n$,
then
$$\gamma^{NN}_{ks}(G) \geq \left\{\begin{array}{ll}
\dfrac{k(r+2)}{r+1}-n & \mbox{ if } $r$ \mbox{ is even,} \\
&\\
k-n & \mbox{ if } $r$ \mbox{ is odd}. \\
\end{array}\right.$$
Furthermore, these bounds are sharp.}
\end{corollary}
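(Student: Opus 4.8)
The plan is to read off both inequalities directly from Theorem \ref{theokNN} and then to exhibit sharp examples. First I would specialize the degree sequence: if $G$ is $r$-regular of order $n$ then $d_1=\cdots=d_n=r$, $\Delta=\delta=r$, and $m=nr/2$, so $\sum_{i=1}^{k}\lceil\frac{d_i+1}{2}\rceil=k\lceil\frac{r+1}{2}\rceil$. Substituting into Theorem \ref{theokNN}(1) gives $\gamma^{NN}_{ks}(G)\ge \frac{2k\lceil(r+1)/2\rceil}{r+1}-n$, and substituting into Theorem \ref{theokNN}(2) gives the same expression once the $-nr$ and $-n$ terms are combined over the denominator $r+1$. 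It then remains only to evaluate the ceiling according to the parity of $r$: when $r$ is even, $\lceil\frac{r+1}{2}\rceil=\frac{r+2}{2}$, which yields the bound $\frac{k(r+2)}{r+1}-n$; when $r$ is odd, $\lceil\frac{r+1}{2}\rceil=\frac{r+1}{2}$, which yields $k-n$. This disposes of the inequality.

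For sharpness I would take $G$ to be the disjoint union of $t$ copies of $K_{r+1}$, which is $r$-regular of order $n=t(r+1)$, together with $k=j(r+1)$ for some $0\le j\le t$. Inside a single copy of $K_{r+1}$ every vertex has the whole copy as its closed neighbourhood, so if a copy is assigned $p$ values $+1$ and $r+1-p$ values $-1$, then $f(N_G[v])=2p-(r+1)$ simultaneously for all of its vertices. Taking $p=\lceil\frac{r+1}{2}\rceil$ on $j$ of the copies makes precisely those $j(r+1)=k$ vertices satisfy $f(N_G[v])\ge 0$, while assigning $-1$ everywhere on the remaining $t-j$ copies is the cheapest available choice there. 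A direct computation of $\omega(f)$ then gives $j(r+2)-n=\frac{k(r+2)}{r+1}-n$ when $r$ is even and $-(t-j)(r+1)=k-n$ when $r$ is odd, i.e. exactly the lower bound just established; since that lower bound also shows no NNSkSDF of $G$ can do better, $\gamma^{NN}_{ks}(G)$ equals the claimed value for this whole family.

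Finally I would record the specialization $k=n$: for even $r$ the graph $G$ has only even vertices, so Observation \ref{Signed} gives $\gamma^{NN}_{ns}(G)=\gamma_{ns}(G)$ and the first bound becomes Henning's bound from Corollary \ref{Hen} (and more generally reproves Corollary \ref{Hatt}), while for odd $r$ the bound $k-n$ equals $0$ and is attained, e.g., by $K_n$ with $n$ even in view of Theorem \ref{Hua6}. There is no genuinely hard step here: the inequality is a substitution into Theorem \ref{theokNN}, and the only point requiring care is the weight computation in the sharpness construction — together with the mild caveat that the construction above is tight exactly when $r+1 \mid k$, so to assert sharpness for every $k$ one would restrict attention to those $k$ or append a small corrective gadget to a single copy.
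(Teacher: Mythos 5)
Your proposal is correct and follows exactly the route the paper intends: the corollary is stated there as an immediate consequence of Theorem \ref{theokNN}, and your substitution $d_i=\delta=\Delta=r$, $m=nr/2$ together with the parity evaluation of $\lceil\frac{r+1}{2}\rceil$ is precisely that derivation (both parts of the theorem collapsing to the same bound, as you note). Your explicit sharpness witness --- disjoint copies of $K_{r+1}$ with $k$ a multiple of $r+1$, whose weight computation checks out in both parity cases --- actually supplies a verification the paper omits, and your caveat about general $k$ is a fair observation rather than a gap in the claimed result.
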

\noindent Clearly, if $r$ is even, then  by Observation \ref{Signed}  we have the same given bound in Corollary \ref{HATT}.

%%%%%%%%%%%%%%%%%%%%%%%%%%%%%%%%%%%%%%%%%%%%%%%%%%%


\begin{thebibliography}{9}

\bibitem{AS} M. Atapour and S.M. Sheikholeslami, {\em On the nonnegative signed domination numbers in graphs}, Electronic Journal of Graph Theory and Applications {\bf 4}(2) (2016) 231--237.

\bibitem{CS} W. Chen and E. Song, {\em Lower bounds on several versions of signed domination number}, Disceret Mathematics {\bf 308 } (2008) 1837--1846.

\bibitem{CM} E.J. Cockayne, C.M. Mynhardt, {\em On a generalization of signed dominating function of graphs}, Ars Combinatoria {\bf 43} (1996) 235--245.

\bibitem {DHHS} J. Dunbar, S.T. Hedetniemi, M.A. Henning and P.J. Slater, {\em Signed domination in graphs}, Graph theory, Combinatorics, and Applications  {\bf 1}, Wiley, New York, 1995, 311--322.

\bibitem{HATT} J.H. Hattingh, E. Ungerer, M.A. Henning, {\em Partial signed domination in graphs}, Ars Combinatoria, {\bf 48} (1998) 33--42.

\bibitem {HHS} T.W. Haynes, S.T. Hedetniemi and P. J. Slater, {\em Domination in graphs: Advance Topics}, Marcel Dekker, Inc., New York, 1998.

\bibitem {H} M.A. Henning, {\em Domination in regular graphs}, Ars Combinatoria {\bf 43} (1996) 263--271.

\bibitem {HLFZ} Z. Huang, W. Li, Z. Feng and H. Xing, {\em On nonnegative signed domination in graphs and its algorithmic complexity}, Journal of  Networks {\bf 8}(2) (2013), 365--372.

\bibitem {W} D. B. West, {\em Introduction to Graph Theory},
Prentice-Hall, Inc, 2000.

\end{thebibliography}
\end{document}